\newtheorem{thm}{Theorem}
\newtheorem{cor}[thm]{Corollary}
\newtheorem{lem}[thm]{Lemma}
\renewcommand{\a}{\alpha}
\newcommand{\go}{{\mathfrak{o}}}
\newcommand{\gp}{{\mathfrak{p}}}
\newcommand{\gX}{{\mathfrak{X}}}
\newcommand{\Acal}{{\mathcal A}}
\newcommand{\Fcal}{{\mathcal F}}
\newcommand{\Hcal}{{\mathcal H}}
\newcommand{\Ucal}{{\mathcal U}}
\newcommand{\CC}{\mathbb{C}}
\newcommand{\NN}{\mathbb{N}}
\newcommand{\QQ}{\mathbb{Q}}
\newcommand{\RR}{\mathbb{R}}
\newcommand{\ZZ}{\mathbb{Z}}
\newcommand{\bfK}{{\mathbf K}}
\newcommand{\GL}{\operatorname{GL}}
\newcommand{\PGL}{\operatorname{PGL}}
\newcommand{\End}{\operatorname{End}}
\newcommand{\bsl}{\backslash}
\title{A quantum probabilistic approach to Hecke algebras for $\gp$-adic $\PGL_2$}
\author{Takehiro Hasegawa}
\author{Hayato Saigo}
\author{Seiken Saito}
\author{Shingo Sugiyama}
\date{}
\subjclass[2010]{Primary 46L53; Secondary 33D80}
\keywords{Quantum probability, spherical Hecke algebras, Fourier inversion formulas.}
\begin{document}

\begin{abstract}
The subject of the present paper
is an application of quantum probability to $p$-adic objects.
We give a quantum-probabilistic interpretation of the spherical Hecke algebra
for $\PGL_2(F)$, where $F$ is a $p$-adic field.
As a byproduct, we obtain a new proof of the Fourier inversion formula for $\PGL_2(F)$.
\end{abstract}

\maketitle

\section*{Introduction}

Hecke operators acting on the space of modular forms are central objects in number theory.
There are stochastic problems of eigenvalues of Hecke operators.
As a remarkable example, the Sato-Tate conjecture (proved in \cite{HSBT}, \cite{Gee} and \cite{BLGHT}) is known. 
This states that, eigenvalues of all Hecke operators $\{T_p\}_p$ with respect to a fixed simultaneous eigenvector
are equidistributed in the interval $[-2,2]$ with respect to the probability measure
$$\frac{\sqrt{4-x^2}}{{2\pi}}dx.$$
Before the Sato-Tate conjecture was resolved,
as a deduction from the Eichler-Selberg trace formula,
Serre \cite{Serre} had found out the vertical Sato-Tate law:
For a fixed $p$-th Hecke operator $T_p$ for a prime number $p$, all eigenvalues of $T_p$ are equidistributed in the interval $[-2,2]$ with respect to the probability measure
$$\frac{p+1}{(p^{1/2}+p^{-1/2})^2-x^2}\frac{\sqrt{4-x^2}}{2\pi}dx.$$
Each of two measures as above is called the Wigner semicircle distribution and
the Kesten distribution with parameter $p+1$, respectively.
The Kesten distribution with parameter $p+1$ is understood as the spherical Plancherel measure for $\PGL_2(\QQ_p)$ and appears in studies of modular forms and their $L$-functions (e.g., \cite{RamakrishnanRogawski}, 
\cite{FeigonWhitehouse}, \cite{TsuzukiSpec}, \cite{Sugiyama2}, \cite{SugiyamaTsuzukiHolo}, \cite{SugiyamaTsuzukiDeriv} and \cite{SugiyamaTsuzukiJZ}).

The Wigner semicircle distribution and the Kesten distributions have been studied in a framework of quantum probability theory,
which captures a classical random variable as an element of a unital $\ast$-algebra not necessarily commutative.
A quantum probability space is a pair $(\Acal, \varphi)$ of a unital $\ast$-algebra $\Acal$ and a state $\varphi:\Acal \rightarrow \CC$.
The state $\varphi$ is a substitute of expectation,
and $(\Acal, \varphi)$ recovers classical probability theory when $\Acal$ is commutative.
It is well-known that the $m$-th moment $\varphi(a^m)$ for $a\in\Acal$ is described by
that of a probability measure and an interacting Fock space (cf.\ \cite{AB}), on which
an analogue of decomposition of the quantum harmonic oscillator into the creation and the annihilation operators are discussed.
Such a decomposition is applied
to analysis of asymptotic spectral distributions of graphs (cf.\ \cite{AccardiObata}, \cite{HoraObata} and \cite{Obata})
and
to quantum-classical correspondence asymptotically controlled by the arcsine law (cf.\ \cite{Saigo} and \cite{SaigoSako}).

The consideration above inspires us to give a new insight into the Hecke operator $T_p$ in terms of quantum probability theory.
In this paper, we treat the $p$-adic group $G=\PGL_2(\QQ_p)$ for a prime number $p$ and its spherical Hecke algebra $\Hcal(G,K)$
consisting of all $\PGL_2(\ZZ_p)$-biinvariant $\CC$-valued functions on $G$ with compact support, keeping in mind that the Hecke operator $T_p$ on modular forms corresponds to an action to adelized modular forms of an element $T(p\ZZ_p)$ of $\Hcal(G, K)$ (cf.\ \cite[\S3.6]{Bump}).
Our new contribution in this paper is to give an interacting Fock space structure to $\Hcal(G,K)$ and a quantum decomposition of the $T(p\ZZ_p)$-multiple (see Theorem \ref{Hecke alg IFS}).
As a corollary, we give a new proof of the Fourier inversion formula for $\PGL_2(\QQ_p)$ originally given by Macdonald \cite{Macdonald}.
The key of his proof is a concrete calculation of integrals concerned with the explicit formula \eqref{explicit of omega} of spherical function.
In contrast to his proof, we use quantum probability theory and intrinsic properties of spherical functions, and prove the inversion formula without explicit forms of those functions.
Throughout this paper, we describe our results for any non-archimedean local field $F$ in place of $\QQ_p$.


\section{Preliminaries}
Let $\NN$ denote the set of all positive integers and set $\NN_0=\NN\cup\{0\}$.
For a set $X$ and its subset $A$, ${\rm ch}_A$ denotes the characteristic function of $A$ on $X$.

Let $F$ be a non-archimedean local field, $\go$ its integer ring
and $\gp$ the maximal ideal of $\go$.
The cardinality of the residue field $\go/\gp$ is denoted by $q$.
We fix a uniformizer $\varpi$ of $F$.
For example, the $p$-adic field $F=\QQ_p$ for a prime number $p$ is a non-archimedean local field with characteristic $0$. In this case,
we have $\go=\ZZ_p$, $\gp=p\ZZ_p$ and $q=p$.
The prime number $p$ is a uniformizer of $\QQ_p$.

Set $G=\PGL(2, F)$ and $K=\PGL(2, \go)$. The unit element of $G$ is denoted by $1_2$.
Then $K$ is a maximal compact subgroup of $G$ and the Cartan decomposition
$G=\coprod_{n=0}^{\infty}K[\begin{smallmatrix}
\varpi^n & 0 \\ 0 & 1
\end{smallmatrix}]K$
holds (cf.\ \cite[Proposition 4.6.2]{Bump}).
We take a Haar measure $dg$ on $G$ such that the volume of $K$ equals $1$.

Let $B$ be the Borel subgroup of $G$ consisting of the upper triangular matrices.
For $s \in \CC$, we denote by $\pi_s={\rm Ind}_B^G(|\cdot|^{s}\boxtimes|\cdot|^{-s})$ the normalized parabolic induction whose representation space
$V_s$ consists of all $\CC$-valued locally constant functions $f$ on $G$ satisfying
$f([\begin{smallmatrix}
a & b \\ 0 & d
\end{smallmatrix}]g)=|\frac{a}{d}|^{s+1/2}f(g)$ for all $g \in G$ and $[\begin{smallmatrix}
a & b \\ 0 & d
\end{smallmatrix}] \in B$ and the $G$-action is the right translation. 
Set $\mathfrak{X}=i[0, 2\pi (\log q)^{-1}]$. If $s \in \mathfrak{X}$, then $\pi_s$ is an irreducible unitarizable representation of $G$
and the $K$-invariant subspace of $V_s$ is one dimensional.
We remark that the representations $\pi_s$ $(s\in \mathfrak{X})$ exhaust the unitary representations appearing in the spectral decomposition of $L^2(G/K)$.

Let $\mathcal{H}(G)$ be the Hecke algebra of $G$, that is, the space of all $\CC$-valued locally constant functions
with compact support. This is a non-commutative associative algebra over $\CC$ without unit,
where the product $f_1\circ f_2$ for $f_1, f_2 \in \Hcal(G)$ is the convolution defined
by
$$(f_1\circ f_2)(g)=\int_G f_1(gx^{-1})f_2(x)dx, \qquad g \in G.$$
Then $\Hcal(G)$ has a $\ast$-algebra structure by $f^*(g)=\overline{f(g^{-1})}$.
The $\ast$-algebra $\Hcal(G)$ acts on $C(G)$ by right translation $R$:
$$(R(f)\varphi)(g)=\int_G f(x)\varphi(gx)dx, \qquad f\in \Hcal(G), \, \varphi \in C(G).$$
Let $\Hcal(G, K)$ denote the subalgebra of $\Hcal(G)$
consisting of all $K$-biinvariant functions in $\Hcal(G)$.
It has a $\ast$-subalgebra of $\Hcal(G)$ and commutative (cf.\ \cite[Theorem 4.6.1]{Bump}).
Furthermore, $\Hcal(G, K)$ has a unit ${\rm ch}_{K}$.
Remark that $f(g)=f(g^{-1})$ for any $f \in \Hcal(G,K)$ because of $K[\begin{smallmatrix}
\varpi^{-n} & 0 \\ 0 & 1
\end{smallmatrix}]K = K[\begin{smallmatrix}\varpi^n & 0 \\ 0 & 1
\end{smallmatrix}]K$.
In particular, we have $f_1^*=\overline{f_1}$ and $f_1\circ f_2 = R(f_2)f_1$ for any $f_1, f_2 \in \Hcal(G,K)$.
The $L^2$-inner product on $L^2(G, dg)$ is denoted by $\langle \cdot, \cdot\rangle_G$.

%

\section{Hecke operators}\label{Hecke operators}

For any $n \in \NN_0$, let $T(\gp^n)$ be the characteristic function of
$$Z\backslash Z\{g\in {\rm M}_2(\go) \ | \ \det (g)\go=\gp^n \},$$
where $Z$ is the center of $\GL_2(F)$.
Then, $T(\gp^0) = {\rm ch}_K$ holds.
The action of $T(\gp^n)$ as $R(T(\gp^n))$ is called the Hecke operator at $\gp^n$. 
The following is a well-known recurrence equation (cf.\ \cite[Proposition 4.6.4]{Bump}).
\begin{lem}\label{relation of Hecke op}
	For any $n \in \NN$, we have $$T(\gp)\circ T(\gp^n)=T(\gp^{n+1})+q T(\gp^{n-1}).$$
\end{lem}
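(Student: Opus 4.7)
My plan is to verify the identity pointwise at the Cartan representatives $a_m := \bigl[\begin{smallmatrix}\varpi^m & 0 \\ 0 & 1\end{smallmatrix}\bigr]$; since both sides are $K$-biinvariant, this suffices by the Cartan decomposition. First, I would put the convolution into a form amenable to coset enumeration. Using bi-invariance and inversion-invariance of Haar on the unimodular group $G$, the substitution $x \mapsto y^{-1}g$ rewrites $(f_1 \circ f_2)(g) = \int_G f_1(y) f_2(y^{-1}g)\,dy$. Decomposing $K a_1 K = \bigsqcup_i y_i K$ into right $K$-cosets and collapsing each integral over $y_i K$ via the left $K$-invariance of $T(\gp^n)$ together with $\vol(K) = 1$, I obtain the key formula
\begin{equation*}
(T(\gp) \circ T(\gp^n))(g) \;=\; \sum_i T(\gp^n)(y_i^{-1} g).
\end{equation*}

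Next, I would plug in the standard right $K$-coset decomposition
\begin{equation*}
K a_1 K \;=\; a_1 K \;\sqcup\; \bigsqcup_{b \in \go/\gp} \bigl[\begin{smallmatrix} 1 & b \\ 0 & \varpi \end{smallmatrix}\bigr] K
\end{equation*}
(yielding $q+1$ representatives), set $g = a_m$, and compute the Cartan class of each $y_i^{-1} a_m$ in $\PGL_2(F)$ via Smith normal form. Using that $a_{-k}$ and $a_k$ represent the same double coset modulo the center, I find $a_1^{-1} a_m \in K a_{|m-1|} K$, and for every $b \in \go/\gp$,
\begin{equation*}
\bigl[\begin{smallmatrix} 1 & b \\ 0 & \varpi \end{smallmatrix}\bigr]^{-1} a_m \;\equiv\; \bigl[\begin{smallmatrix}\varpi^{m+1} & -b \\ 0 & 1\end{smallmatrix}\bigr] \;\in\; K a_{m+1} K,
\end{equation*}
uniformly in $b$, because the unit entry $1$ forces the elementary divisors of the right-hand matrix to be $(1, \varpi^{m+1})$. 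Summing over cosets yields
\begin{equation*}
(T(\gp) \circ T(\gp^n))(a_m) \;=\; T(\gp^n)(a_{|m-1|}) + q\, T(\gp^n)(a_{m+1}).
\end{equation*}

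Finally, I would match this with $T(\gp^{n+1})(a_m) + q\, T(\gp^{n-1})(a_m)$. Unwinding the paper's definition, the scalar lift $\lambda a_\ell$ with $v(\lambda) = (k-\ell)/2$ witnesses $a_\ell \in Z\backslash Z\{g \in \mathrm{M}_2(\go) : \det g \, \go = \gp^k\}$ precisely when $0 \le \ell \le k$ and $\ell$ has the same parity as $k$. The identity then reduces to a short parity-and-inequality check in two cases, $m \ge 1$ (where $|m-1| = m-1$) and $m = 0$ (where $|m-1| = 1 = m+1$). The main subtlety throughout is the $\PGL_2$ bookkeeping: one must remember that $T(\gp^n)$ is the characteristic function of $\bigsqcup K a_k K$ over those $k$ with $0 \le k \le n$ and $k \equiv n \pmod{2}$, rather than of the single double coset $K a_n K$, because scalar lifts produce additional contributions from lower Cartan types of the same parity.
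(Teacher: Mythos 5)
Your overall strategy is sound, and it is worth noting that the paper itself gives no proof of this lemma at all --- it simply cites \cite[Proposition 4.6.4]{Bump} --- so a self-contained coset computation is a legitimate and natural route. The reduction of the convolution to $\sum_i T(\gp^n)(y_i^{-1}g)$ over a right-coset decomposition $Ka_1K=\bigsqcup_i y_iK$, the criterion for when $T(\gp^k)(a_\ell)=1$ (namely $0\le\ell\le k$ and $\ell\equiv k \bmod 2$), and the final parity-and-inequality check are all correct.

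There is, however, a concrete error in the middle step: the displayed decomposition $Ka_1K = a_1K\sqcup\bigsqcup_{b\in\go/\gp}\bigl[\begin{smallmatrix}1&b\\0&\varpi\end{smallmatrix}\bigr]K$ is not a right-coset decomposition. Indeed
$$\bigl[\begin{smallmatrix}1&b'\\0&\varpi\end{smallmatrix}\bigr]^{-1}\bigl[\begin{smallmatrix}1&b\\0&\varpi\end{smallmatrix}\bigr]=\bigl[\begin{smallmatrix}1&b-b'\\0&1\end{smallmatrix}\bigr]\in K,$$
so all $q$ of those matrices lie in a \emph{single} right coset; they are representatives for the left cosets $K\backslash Ka_1K$, not for $Ka_1K/K$. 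A symptom you could have caught: you found that $y_b^{-1}a_m$ lands in $Ka_{m+1}K$ ``uniformly in $b$,'' which is exactly what must happen when the $y_b$ all represent the same right coset, since the Cartan class of $y^{-1}g$ depends only on $yK$. The repair is routine: use genuine right-coset representatives, e.g.\ $\bigl[\begin{smallmatrix}1&0\\0&\varpi\end{smallmatrix}\bigr]$ together with $\bigl[\begin{smallmatrix}\varpi&b\\0&1\end{smallmatrix}\bigr]$ for $b\in\go/\gp$. Then $\bigl[\begin{smallmatrix}1&0\\0&\varpi\end{smallmatrix}\bigr]^{-1}a_m\sim a_{m+1}$, while $\bigl[\begin{smallmatrix}\varpi&b\\0&1\end{smallmatrix}\bigr]^{-1}a_m\sim\bigl[\begin{smallmatrix}\varpi^{m}&-b\\0&\varpi\end{smallmatrix}\bigr]$ lies in $Ka_{m+1}K$ for $b\in\go^{\times}$ and in $Ka_{|m-1|}K$ for $b=0$; summing reproduces your identity $(T(\gp)\circ T(\gp^n))(a_m)=T(\gp^n)(a_{|m-1|})+q\,T(\gp^n)(a_{m+1})$, and the rest of your argument goes through unchanged. (Alternatively, keep your representatives but first use $T(\gp)(y)=T(\gp)(y^{-1})$, noted in the paper, to rewrite the convolution as a sum over left-coset representatives $Ka_1K=\bigsqcup_j Kz_j$.) So the conclusion and the endgame are right, but the coset bookkeeping as written is false and must be corrected before the proof is valid.
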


Let $\Psi_n$ be the characteristic function of $\bfK[\begin{smallmatrix}
\varpi^n & 0 \\ 0 & 1
\end{smallmatrix}]\bfK$ on $G$ for any $n\in \NN_0$.
Then we have the relation $T(\gp^n)=\sum_{r=0}^{\lfloor n/2 \rfloor}\Psi_{n-2r}$ in the same way as \cite[Lemma 6]{Sugiyama1}.
By solving this on $\Psi_n$, we have the following (cf.\ \cite[Remark 7]{Sugiyama1}).
\begin{lem}\label{psi}
We have $\Psi_0 = {\rm ch}_K$, $\Psi_1=T(\gp)$, and $\Psi_n = T(\gp^{n})-T(\gp^{n-2})$ for $n\ge 2$.
\end{lem}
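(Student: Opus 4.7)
The plan is to invert the stated identity $T(\gp^n)=\sum_{r=0}^{\lfloor n/2 \rfloor}\Psi_{n-2r}$ by a telescoping subtraction. There is no real difficulty: once the sum formula is in hand, the claimed relations are essentially read off.

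First I would dispose of the base cases. For $n=0$, the sum on the right collapses to the single term $\Psi_0$, giving $\Psi_0=T(\gp^0)={\rm ch}_K$ by the definition of $T(\gp^0)$ in Section \ref{Hecke operators}. For $n=1$, the floor $\lfloor 1/2\rfloor=0$, so again only one term appears, yielding $\Psi_1=T(\gp)$.

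For the inductive case $n\ge 2$, I would write out the two identities
\begin{align*}
T(\gp^n) &= \Psi_n+\Psi_{n-2}+\Psi_{n-4}+\cdots,\\
T(\gp^{n-2}) &= \Psi_{n-2}+\Psi_{n-4}+\cdots,
\end{align*}
where both sums terminate at $\Psi_0$ or $\Psi_1$ depending on the parity of $n$. The key observation is that the two tails are identical: the indices appearing in the expansion of $T(\gp^{n-2})$ are exactly $\{n-2,n-4,\dots\}$, which is precisely the expansion of $T(\gp^n)$ with the leading term $\Psi_n$ removed. Subtracting the second identity from the first therefore causes a complete cancellation of all terms except $\Psi_n$, giving $T(\gp^n)-T(\gp^{n-2})=\Psi_n$ as required.

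The main obstacle, if any, is purely bookkeeping: one should verify that the ranges of the indices $n-2r$ in the two sums match up correctly, which amounts to noting that $\lfloor n/2\rfloor=\lfloor (n-2)/2\rfloor+1$ for $n\ge 2$, so the number of summands in $T(\gp^n)$ exceeds that in $T(\gp^{n-2})$ by exactly one, namely the term $r=0$ contributing $\Psi_n$. No further input is needed.
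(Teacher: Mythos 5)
Your proposal is correct and follows exactly the route the paper takes: the paper obtains the lemma "by solving" the relation $T(\gp^n)=\sum_{r=0}^{\lfloor n/2\rfloor}\Psi_{n-2r}$ for $\Psi_n$, which is precisely your telescoping subtraction. The base cases and the index bookkeeping are handled correctly, so there is nothing to add.
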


Set $T'(\gp)=q^{-1/2}T(\gp)$ and $\Phi_n = q^{-n/2}\Psi_n$ ($n \in \NN_0$).
\begin{lem}\label{relation of phi}
We have the relations
$$T'(\gp)\circ \Phi_n=\Phi_{n+1}+\Phi_{n-1} \qquad (n \ge2),$$
$$T'(\gp)\circ \Phi_1=\Phi_2+\frac{q+1}{q}\Phi_0.$$
\end{lem}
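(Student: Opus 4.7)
The plan is to rescale the two target identities back into statements about $T(\gp)$ and the $\Psi_k$'s, and then verify those by straightforward manipulation using Lemma \ref{relation of Hecke op} and Lemma \ref{psi}. Since $T'(\gp)\circ \Phi_n = q^{-(n+1)/2}\,T(\gp)\circ\Psi_n$, $\Phi_{n\pm 1}=q^{-(n\pm 1)/2}\Psi_{n\pm 1}$ and $\Phi_0=\Psi_0$, the two formulas in the lemma are equivalent to
\begin{align*}
T(\gp)\circ \Psi_n &= \Psi_{n+1} + q\,\Psi_{n-1} \qquad (n\ge 2), \\
T(\gp)\circ \Psi_1 &= \Psi_2 + (q+1)\,\Psi_0.
\end{align*}
Only the second needs the prefactor $(q+1)/q$ in the $\Phi$-form: it arises because $\Psi_0$ rescales by the factor $1$ whereas $\Psi_2$ rescales by $q^{-1}$.

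For the generic range $n\ge 3$, I would substitute $\Psi_n=T(\gp^n)-T(\gp^{n-2})$ from Lemma \ref{psi} and apply Lemma \ref{relation of Hecke op} to both $T(\gp)\circ T(\gp^n)$ and $T(\gp)\circ T(\gp^{n-2})$; both applications are legal because $n-2\ge 1$. Regrouping the four resulting $T(\gp^k)$-terms as $(T(\gp^{n+1})-T(\gp^{n-1}))+q(T(\gp^{n-1})-T(\gp^{n-3}))$ gives $\Psi_{n+1}+q\Psi_{n-1}$ on the nose.

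The main (indeed only) subtlety is handling the two boundary cases $n=1$ and $n=2$ separately, since Lemma \ref{relation of Hecke op} is stated only for $n\in\NN$ and does not apply when one of its inputs would be $T(\gp^0)$. For $n=2$, I note that $T(\gp^0)={\rm ch}_K$ is the unit of $\Hcal(G,K)$, so $T(\gp)\circ T(\gp^0)=T(\gp)$ directly; combining this with $T(\gp)\circ T(\gp^2)=T(\gp^3)+q T(\gp)$ from Lemma \ref{relation of Hecke op} yields $T(\gp)\circ\Psi_2 = T(\gp^3)+(q-1)T(\gp) = \Psi_3+q\Psi_1$, as needed. For $n=1$ I use $\Psi_1=T(\gp)$ and apply Lemma \ref{relation of Hecke op} with $n=1$ to obtain $T(\gp)\circ\Psi_1 = T(\gp^2)+q\,{\rm ch}_K$; rewriting $T(\gp^2)=\Psi_2+\Psi_0$ via Lemma \ref{psi} (recall $\Psi_0={\rm ch}_K$) gives $\Psi_2+(q+1)\Psi_0$. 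Multiplying each resulting identity by $q^{-(n+1)/2}$ and reinterpreting everything in terms of $T'(\gp)$ and $\Phi_k$ yields the two displays of the lemma.
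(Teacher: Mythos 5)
Your proof is correct and follows essentially the same route as the paper's: both expand $\Phi_n$ (equivalently $\Psi_n$) via Lemma \ref{psi}, apply the recurrence of Lemma \ref{relation of Hecke op}, and regroup, treating the small $n$ separately. The only cosmetic difference is that you clear the $q$-powers first and work with $T(\gp)$ and $\Psi_k$, and you write out the $n=2$ case that the paper leaves as ``similarly computed.''
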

\begin{proof}We use Lemmas \ref{relation of Hecke op} and \ref{psi}. For $n=1$, a direct computation gives us
$$T'(\gp)\Phi_1=\frac{1}{q}T(\gp)^2=\frac{1}{q}(T(\gp^2)+qT(\gp^0))=\frac{1}{q}(T(\gp^2)-T(\gp^0))
+\frac{1+q}{q}T(\gp^0)=\Phi_2+\frac{q+1}{q}\Phi_0.$$
For any $n \ge3$, we obtain
\begin{align*}
T'(\gp)\Phi_{n} = & q^{-1/2}T(\gp)\circ q^{-n/2}(T(\gp^n)-T(\gp^{n-2}))\\
= & q^{-(n+1)/2}(T(\gp^{n+1})+qT(\gp^{n-1}) -T(\gp^{n-1})-qT(\gp^{n-3})) \\
= & q^{-(n+1)/2}(T(\gp^{n+1})-T(\gp^{n-1}))+q^{-(n-1)/2}(T(\gp^{n-1})-T(\gp^{n-3})) \\
=& \Phi_{n+1}+\Phi_{n-1}.
\end{align*}
The case $n=2$ is similarly computed.
Thus we are done.
\end{proof}

Recall that $\Hcal(G,K)$ is a pre-Hilbert space with respect to the $L^2$-inner product $\langle \cdot,\cdot \rangle_G$.
The Cartan decomposition of $G$ yields that $\{\Phi_{n}\}_n$ is an orthogonal system
and generates $\Hcal(G,K)$ because of $f(g)=\sum_{n=0}^{\infty}f(\begin{smallmatrix}
\varpi^n & 0 \\ 0 & 1
\end{smallmatrix})\Psi_n$.

Therefore we can define $B^+$ and $B^{-}$ in $\End(\Hcal(G,K))$ by
$$B^+\Phi_0=\sqrt{\frac{q+1}{q}}\Phi_1, \qquad B^+\Phi_{n}=\Phi_{n+1} \quad(n\ge 1), $$
$$B^-\Phi_0=0, \qquad B^-\Phi_1= \sqrt{\frac{q+1}{q}}\Phi_0, \qquad B^-\Phi_n=\Phi_{n-1} \quad (n\ge2).$$
Set $B^0=0 \in \End(\Hcal(G,K))$. Combining these with Lemma \ref{relation of phi}, we have the following.

\begin{thm}\label{Hecke alg IFS}
The quintuple $(\Hcal(G,K), \{\Phi_n\}_n, B^+, B^-, B^0)$ is an interacting Fock space.
The associated Jacobi coefficient $(\{\omega_n\}_n, \{\a_n\}_n)$ is given by
$\omega_1=\frac{q+1}{q}$, $\omega_n=1$ for $n\ge 2$, and $\a_n=0$ for all $n \in \NN_0$. Furthermore, we have the quantum decomposition
$$R(T'(\gp))=B^{+}+B^{-}+B^{0} \in \End(\Hcal(G,K)).$$
\end{thm}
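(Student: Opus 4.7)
The plan is to verify, in sequence, the three assertions of the theorem: (i) that $(\Hcal(G,K),\{\Phi_n\},B^+,B^-,B^0)$ satisfies the axioms of a one-mode interacting Fock space, (ii) that the Jacobi coefficients take the claimed values, and (iii) the quantum decomposition identity $R(T'(\gp))=B^++B^-+B^0$.

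Assertions (i) and (ii) are essentially bookkeeping. The Cartan decomposition already recalled shows that $\{\Phi_n\}_{n\ge 0}$ is an orthogonal basis of $\Hcal(G,K)$, so $\Hcal(G,K)$ carries the $\NN$-grading $\bigoplus_{n\ge 0}\CC\Phi_n$. The operators $B^+$, $B^-$, $B^0$ are defined so as to respect this grading, raising, lowering and preserving it respectively, with $B^-\Phi_0=0$ annihilating the vacuum $\Phi_0={\rm ch}_K$. Comparing the defining formulas with the standard templates $B^+\Phi_n=\sqrt{\omega_{n+1}}\,\Phi_{n+1}$, $B^-\Phi_n=\sqrt{\omega_n}\,\Phi_{n-1}$, $B^0\Phi_n=\a_n\Phi_n$ reads off $\omega_1=\frac{q+1}{q}$, $\omega_n=1$ for $n\ge 2$, and $\a_n=0$, giving (i) and (ii) simultaneously.

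The real content is (iii). The strategy is to test the operator identity on each basis vector $\Phi_n$ separately. The bridge between convolution and the right regular action, recalled in the preliminaries as $f_1\circ f_2=R(f_2)f_1$, combined with commutativity of $\Hcal(G,K)$, yields $R(T'(\gp))\Phi_n=T'(\gp)\circ\Phi_n$ for every $n\in\NN_0$. For $n\ge 2$ and $n=1$, this convolution is handed to us directly by Lemma~\ref{relation of phi}; for the boundary case $n=0$, one uses that $\Phi_0={\rm ch}_K$ is the unit of $\Hcal(G,K)$ together with Lemma~\ref{psi} to compute $T'(\gp)\circ\Phi_0=T'(\gp)=q^{-1/2}\Psi_1=\Phi_1$. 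Matching these outputs against $(B^++B^-+B^0)\Phi_n$ read from the definition of $B^\pm$ completes the verification, and the operator identity follows because $\{\Phi_n\}$ spans $\Hcal(G,K)$.

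The point I expect to require the most care is the reconciliation at the bottom of the Fock space. In the bulk range $n\ge 2$, the recurrence $T'(\gp)\circ\Phi_n=\Phi_{n+1}+\Phi_{n-1}$ matches $(B^++B^-)\Phi_n$ transparently, since $\omega_n=1$ there. At $n=0$ and $n=1$, however, the anomalous coefficient $\omega_1=\frac{q+1}{q}$ enters, reflecting the combinatorial asymmetry between the volume $\vol(K)=1$ of the identity double coset and $\vol(K[\begin{smallmatrix}\varpi&0\\0&1\end{smallmatrix}]K)=q+1$ of its first neighbour in the Bruhat--Tits tree. The definitions of $B^\pm$ at the edge of the grading and the exceptional $n=1$ case of Lemma~\ref{relation of phi} are designed precisely to encode this boundary behaviour, so once those ingredients are in place no further input is required.
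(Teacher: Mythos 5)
Your overall strategy is the same as the paper's, which offers no more justification than ``combining these with Lemma~\ref{relation of phi}'': verify the operator identity on each $\Phi_n$, using Lemma~\ref{relation of phi} for $n\ge 1$ and the fact that $\Phi_0={\rm ch}_K$ is the unit for $n=0$ (your computation $T'(\gp)\circ\Phi_0=q^{-1/2}\Psi_1=\Phi_1$ is correct, as is the reduction $R(T'(\gp))\Phi_n=T'(\gp)\circ\Phi_n$ via commutativity).

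However, there is a genuine gap at exactly the step you flag as delicate and then wave through. With the operators as literally defined, the matching at the bottom of the Fock space \emph{fails}: $(B^++B^-+B^0)\Phi_0=\sqrt{\tfrac{q+1}{q}}\,\Phi_1$ whereas $R(T'(\gp))\Phi_0=\Phi_1$, and $(B^++B^-+B^0)\Phi_1=\Phi_2+\sqrt{\tfrac{q+1}{q}}\,\Phi_0$ whereas Lemma~\ref{relation of phi} gives $R(T'(\gp))\Phi_1=\Phi_2+\tfrac{q+1}{q}\Phi_0$; the coefficients differ by a square root. The missing observation is that $\{\Phi_n\}_n$ is orthogonal but \emph{not} orthonormal: $\|\Phi_0\|_G=1$ while $\|\Phi_n\|_G^2=q^{-n}\vol(K[\begin{smallmatrix}\varpi^n&0\\0&1\end{smallmatrix}]K)=\tfrac{q+1}{q}=\omega_1\cdots\omega_n$ for $n\ge1$. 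The square-root coefficients in the definition of $B^{\pm}$ are the standard ones attached to the \emph{normalized} vectors $\Phi_n/\|\Phi_n\|_G$; rewriting Lemma~\ref{relation of phi} in that orthonormal basis produces $\sqrt{\omega_1}$ in both boundary relations, and only then do the quantum decomposition and the adjointness $B^-=(B^+)^*$ (needed for the interacting Fock space axioms and for reading off the Jacobi coefficients from the ``standard templates'' you quote) actually hold. This normalization step is also elided in the paper's own formulation, but a complete proof must carry it out explicitly; as written, your verification asserts an equality of coefficients that is false.
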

The operators $B^+$, $B^-$ and $B^0$ are called the creation, the annihilation, and the preservation operators, respectively.

Let $\varphi : \Hcal(G, K)\rightarrow \CC$ be the vector state for $\Phi_0$:
$$\varphi(f)=\langle R(f)\Phi_0, \Phi_0\rangle_G, \qquad f \in \Hcal(G, K).$$
Then $(\Hcal(G,K), \varphi)$ is a quantum probability space and $T'(\gp)$ is a real random variable
in $(\Hcal(G, K), \varphi)$.
This yields the following equality of $m$-th moments.
\begin{cor}\label{moment}
	For any $m\in\NN_0$, we have
$$\varphi(T'(\gp)^m) = \langle R(T'(\gp))^m\Phi_0, \Phi_0\rangle_G=\int_{-\infty}^{\infty}x^m \frac{q+1}{(q^{1/2}+q^{-1/2})^2-x^2}\frac{\sqrt{4-x^2}}{2\pi}{\rm ch}_{[-2,2]}(x)dx.$$
\end{cor}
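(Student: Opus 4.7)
The plan is to derive the moment identity by invoking the general theory of interacting Fock spaces and then identifying the associated probability measure with the Kesten distribution via its Stieltjes transform.

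First I would establish the first equality. Since $R(f_1\circ f_2)=R(f_1)R(f_2)$ (built into the convolution action recalled in Section~1), iterating gives $R(T'(\gp)^{\circ m})=R(T'(\gp))^m$ on $\Hcal(G,K)$, so
$$\varphi(T'(\gp)^m)=\langle R(T'(\gp))^m\Phi_0,\Phi_0\rangle_G.$$
Here $T'(\gp)^m$ denotes the $m$-fold convolution in the $\ast$-algebra $\Hcal(G,K)$.

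For the harder second equality I would appeal to the standard moment--measure correspondence for interacting Fock spaces. By Theorem~\ref{Hecke alg IFS} one has $R(T'(\gp))=B^++B^-$ with $B^0=0$, $\alpha_n=0$, and Jacobi weights $\omega_1=(q+1)/q$, $\omega_n=1$ ($n\ge 2$). A classical theorem (Accardi--Bo\.zejko \cite{AB}, see also \cite{HoraObata}) asserts that under such hypotheses there exists a unique symmetric probability measure $\mu$ on $\RR$ with finite moments satisfying
$$\langle(B^++B^-)^m\Phi_0,\Phi_0\rangle_G=\int_\RR x^m\,d\mu(x),$$
characterized by its monic orthogonal polynomials $P_n$ obeying $xP_n=P_{n+1}+\omega_n P_{n-1}$; equivalently, the Stieltjes transform $G_\mu(z)=\int(z-x)^{-1}d\mu(x)$ admits the Jacobi continued fraction expansion with parameters $(\omega_n)$ and $\alpha_n=0$. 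This reduces the proof to identifying $\mu$.

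To this end I would compute $G_\mu$ explicitly. Since $\omega_n=1$ for $n\ge 2$, the tail below the first level is a self-similar continued fraction $T(z)$ satisfying $T(z)=1/(z-T(z))$, hence $T(z)=(z-\sqrt{z^2-4})/2$ on $\CC\setminus[-2,2]$ (using the branch $\sqrt{z^2-4}/z\to 1$ at infinity). Substituting gives
$$G_\mu(z)=\frac{1}{z-\tfrac{q+1}{q}T(z)}=\frac{2q}{(q-1)z+(q+1)\sqrt{z^2-4}}=\frac{q\bigl[(q-1)z-(q+1)\sqrt{z^2-4}\bigr]}{2[(q+1)^2-qz^2]}.$$

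The final step is Stieltjes inversion. The discontinuity of $\sqrt{z^2-4}$ across $[-2,2]$ yields the absolutely continuous density
$$\rho(x)=\frac{q(q+1)\sqrt{4-x^2}}{2\pi[(q+1)^2-qx^2]}\,{\rm ch}_{[-2,2]}(x),$$
which, using the identity $(q^{1/2}+q^{-1/2})^2=(q+1)^2/q$, coincides with the Kesten density in the statement. To rule out atoms I would observe that the zeros of the denominator at $x=\pm(q+1)/\sqrt{q}$ lie outside $[-2,2]$ but are cancelled by matching zeros of the numerator, so $G_\mu$ extends holomorphically there; together with the asymptotics $G_\mu(z)\sim z^{-1}$, this forces $\mu$ to be purely absolutely continuous of total mass one. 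The main obstacle is the careful book-keeping of branches and the pole/zero cancellation in $G_\mu$; once $G_\mu$ is simplified, the identification with the Kesten distribution is automatic.
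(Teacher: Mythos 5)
Your proposal is correct and follows essentially the same route as the paper: both reduce the statement to the Accardi--Bo\.zejko/Obata moment theorem for interacting Fock spaces and then identify $\mu$ by evaluating the periodic Jacobi continued fraction for the Stieltjes transform and applying Stieltjes inversion, arriving at the same closed form $G_\mu(z)=q\bigl[(q-1)z-(q+1)\sqrt{z^2-4}\bigr]/\bigl(2[(q+1)^2-qz^2]\bigr)$. Your treatment is in fact slightly more complete, since you also verify the first equality explicitly and rule out atoms via the pole/zero cancellation at $x=\pm(q+1)/\sqrt{q}$, points the paper leaves implicit.
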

\begin{proof}
This is a consequence of \cite[Theorem 4.11]{Obata}. To complete the proof, it suffices to determine a probability measure $\mu$ on $\RR$ associated with $(\{\omega_n\}_n, \{\a_n\}_n)$.
In this case, as in \cite[p.96]{Obata},
$\mu$ is the free Meixner distribution with parameter $(\frac{q+1}{q}, 1,0)$, i.e.,
$$\int_{-\infty}^{\infty}\frac{1}{z-x}d\mu(x)=\cfrac{1}{z-\cfrac{\frac{q+1}{q}}{z-\cfrac{1}{z-\cfrac{1}{\ddots}}}}, \qquad {\rm Im}(z)\neq 0.$$
Since the continued fraction as above is periodic,
the right-hand side is computed as
$$\frac{(2-\frac{q+1}{q})z-\frac{q+1}{q}\sqrt{z^2-4}}{2(1-\frac{q+1}{q})z^2+2(\frac{q+1}{q})^2},$$
where the square root is taken so that $\sqrt{z}>0$ for any $z >0$.
Hence the inverse of the Stieltjes transformation yields
\begin{align}\label{explicit mu}d\mu(x) = \frac{q+1}{(q^{1/2}+q^{-1/2})^2-x^2}\frac{\sqrt{4-x^2}}{2\pi}{\rm ch}_{[-2,2]}(x)dx.
\end{align}
Hence, we obtain the desired formula.
\end{proof}

There exists an orthogonal polynomial system $\{P_n\}_n$ associated with $(\{\omega_n\}_n, \{\a_n\}_n)$.
By the proof of \cite[Theorem 4.11]{Obata}, we have a unitary mapping $\Ucal : \Hcal(G,K) \rightarrow L^2(\RR, d\mu)$ by
$\Phi_n \mapsto (\prod_{j=1}^n \omega_j)^{-1/2}P_n(x)$
and $R(T'(\gp))$ corresponds to the $x$-multiple on $L^2(\RR, d\mu)$ by $\Ucal$.
The mapping $\Ucal$ induces an isometry $L^2(K\bsl G/K, dg)\cong L^2(\RR, d\mu)$.

\smallskip
\noindent
{\bf Remark }:
In the proof of Corollary \ref{moment}, we can determine $\mu$ also in the following way.
By Lemma \ref{relation of Hecke op}, $T(\gp^n)$ is expressed by $T'(\gp)$ as $q^{-n/2}T(\gp^n)=U_n(2^{-1}T'(\gp))$ for any $n\in \NN_0$, where $U_n$ is the $n$-th Chebyshev polynomial of 2nd kind.
Combining this with the definition of $\Phi_n$, the family $\{P_n\}_n$ is described as $P_0(x)=1$, $P_1(x)=U_1(2^{-1}x)$ and $P_n(x) = U_n(2^{-1}x)-q^{-1}U_{n-2}(2^{-1}x)$ for $n\ge 2$.
Then, from the computation in \cite[\S2.3]{Serre},
$\mu$ is given by \eqref{explicit mu}.

\section{Spherical functions and Fourier transforms}
In this section, we show an application of Corollary \ref{moment}
to the Fourier inversion formula for $G/K$.

\subsection{Fourier inversion formula}
Let us review the Fourier inversion formula for $G$.
For $s \in \mathfrak{X}$, there exists $\Omega_s : G \rightarrow \CC$
with properties
\begin{enumerate}
	\item[(i)] $R(T'(\gp))\Omega_s=(q^{-s}+q^{s})\Omega_s$, 
	\item[(ii)] $\Omega_s(k_1gk_2)=\Omega_s(g)$ for all $k_1, k_2 \in K$ and $g \in G$, 
	\item[(iii)] $\Omega_s(1_2)=1$.
\end{enumerate}
Then $\Omega_s$ is uniquely determined by three properties, and called a spherical function.
The spherical function $\Omega_s$ can be given as follows.
Let $\phi_0$ be the $K$-invariant vector of $\pi_s$ determined by $\phi_0\in V_s^K$ and $\phi_0(1_2)=1$.
The representation $\pi_s$ has a $G$-invariant inner product given by
$\langle \phi_1, \phi_2 \rangle_K = \int_{K}\phi_1(k)\overline{\phi_2(k)}dk$ ($\phi_1, \phi_2 \in V_s$).
Note $\|\phi_0\|_K=1$.
Then, $\Omega_s$ is expressed as $\Omega_s(g)=\langle\pi_s(g)\phi_0, \phi_0 \rangle_K$ (cf.\ \cite[Proposition 4.6.6]{Bump}).
In particular, $s\mapsto \Omega_s(g)$ for any fixed $g\in G$ is analytic and $|\Omega_s(g)| \le \|\pi_s(g)\phi_0\|_K \|\phi_0\|_K=\|\phi_0\|_K^2=1$ holds.

We define the Fourier transformation $\Fcal : \Hcal(G, K) \rightarrow C(\mathfrak{X})$ by
$$\Fcal(f)(s)=\int_{G}\Omega_s(g)f(g)dg.$$
Let $\Acal=\CC[q^{-s}+q^s] \subset C(\mathfrak{X})$ be the ring of polynomial functions on $\mathfrak{X}$ with respect to $q^{-s}+q^s$.
Similarly to $\Fcal$, we define $\Fcal^{*}: \Acal \rightarrow C(G)$ by
$$\Fcal^*(\a)(g)=\int_{\gX}\Omega_s(g)\a(s)d\nu(s).$$
Here $d\nu(s)$ is a positive Radon measure on $\gX$ given by
$$d\nu(s)=\frac{1+q^{-1}}{4\pi i}\frac{|1-q^{-2s}|^2}{|1-q^{-2s-1}|^2}(\log q) ds.$$
This measure is called the spherical Plancherel measure for $G$.
Remark that the space $\mathfrak{X}$ parametrizes the spherical tempered unitary dual of $G$.
The Fourier inversion formula is stated as follows.

\begin{thm}\label{Fourier inversion}
The image of $\Fcal$ and of $\Fcal^*$ is contained in $\Acal$ and $\Hcal(G,K)$, respectively. 
Both $\Fcal:\Hcal(G,K)\rightarrow \Acal$ and $\Fcal^*:\Acal\rightarrow \Hcal(G,K)$ are isometries and each of these is the inverse
of the other.
Furthermore, both $\Fcal$ and $\Fcal^*$ are extended to isometries between $L^2$-spaces;
$$L^2(K \backslash G / K, dg) \cong L^2(\gX, d\nu).$$
\end{thm}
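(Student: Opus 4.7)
The plan is to leverage the interacting Fock space structure from Theorem \ref{Hecke alg IFS} in order to identify $\Fcal(\Phi_n)$ explicitly as an orthogonal polynomial in $x = q^{-s}+q^s$, and then transfer the orthogonality of these polynomials from $L^2(\RR,d\mu)$ to $L^2(\gX,d\nu)$ via a push-forward identity between the two measures. Once that is done, mutual inversion of $\Fcal$ and $\Fcal^*$ and the extension to an $L^2$-isometry are routine.

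First, I verify the intertwining relation
\[
\Fcal(R(T'(\gp))f)(s) = (q^{-s}+q^s)\,\Fcal(f)(s), \qquad f \in \Hcal(G,K),
\]
by a Fubini/change-of-variable manipulation of the defining integral, invoking unimodularity of $G$, the identity $T'(\gp)(g^{-1})=T'(\gp)(g)$, and property (i) of $\Omega_s$. Together with $\Fcal(\Phi_0)(s) = \Omega_s(1_2) = 1$ (by (ii) and (iii)) and $\Fcal(T'(\gp))(s) = q^{-s}+q^s$, the three-term recurrence of Lemma \ref{relation of phi} gives by induction
\[
\Fcal(\Phi_n)(s) = P_n(q^{-s}+q^s),
\]
where $\{P_n\}$ is the orthogonal polynomial system of Corollary \ref{moment}. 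In particular $\Fcal(\Hcal(G,K)) \subset \Acal$, with equality because $\{P_n\}$ is a linear basis of $\CC[x]$.

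Second, I establish the push-forward identity
\[
\int_\gX F(q^{-s}+q^s)\,d\nu(s) = \int_{-2}^{2} F(x)\,d\mu(x), \qquad F \in C([-2,2]).
\]
Setting $s=it$, one computes $|1-q^{-2s}|^2 = 4-x^2$ and $|1-q^{-2s-1}|^2 = q^{-1}\bigl((q^{1/2}+q^{-1/2})^2 - x^2\bigr)$, and the substitution $x = 2\cos(t\log q)$ with Jacobian $|dx| = \sqrt{4-x^2}\,\log q\,dt$ converts $d\nu$ into $d\mu$ on each of the two monotone branches of the $2$-to-$1$ covering $s\mapsto x$. Combined with the formula for $\Fcal(\Phi_n)$ above, this immediately yields $\|\Fcal(\Phi_n)\|_{L^2(d\nu)}^2 = \|P_n\|_{L^2(d\mu)}^2 = \|\Phi_n\|_G^2$, so $\Fcal$ is an isometry on the orthogonal basis $\{\Phi_n\}$, and extends to a unitary $L^2(K\bsl G/K)\to L^2(\gX,d\nu)$.

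For $\Fcal^* = \Fcal^{-1}$, I compute $\langle \Fcal^*(P_n(q^{-\cdot}+q^{\cdot})),\,\Phi_m\rangle_G$ by Fubini (compact support of $\Phi_m$) and the push-forward identity, obtaining $\delta_{nm}\,\|\Phi_m\|_G^2$; the orthogonal expansion in $\{\Phi_m\}$ then forces $\Fcal^*(P_n(q^{-s}+q^s)) = \Phi_n$, whence $\Fcal^*$ maps $\Acal$ to $\Hcal(G,K)$ and inverts $\Fcal$. The main obstacle is the push-forward identity in the second step: the quantum-probabilistic framework reduces everything to the matching of $d\nu$ with $d\mu$, but that matching is an explicit analytic calculation which, although essentially routine, cannot be bypassed by the abstract structure alone.
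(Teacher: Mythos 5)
Your proposal is correct and follows essentially the same route as the paper: both arguments use only properties (i) and (iii) of $\Omega_s$ to identify the Fourier transform with the map coming from the orthogonal polynomial system of Corollary \ref{moment}, transfer $d\nu$ to $d\mu$ via $s\mapsto q^{-s}+q^s$, and recover $\Fcal^*$ by a Fubini/duality computation against the $\Phi_m$ (the paper tests against $T'(\gp)^n$ instead, which is equivalent). The only difference is presentational: you verify the pushforward $\nu\mapsto\mu$ explicitly as a $2$-to-$1$ covering, a calculation the paper asserts without detail.
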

This result as above is a special case of \cite[Theorem 5.1.2]{Macdonald} (see also \cite[Theorem 4.7]{Tadic} and \cite[Theorem 3.1]{Tsuzuki2}).

\subsection{Proof of the Fourier inversion formula}
By Corollary \ref{moment}, we obtain the isometry $\Ucal : \Hcal(G,K)\rightarrow \bigoplus_{n=0}^{\infty}\CC P_n$, as was explained in the last of \S \ref{Hecke operators}.
We have a homeomorphism $\mathfrak{X}\cong [-2,2]$ by $s\mapsto q^{-s}+q^s$,
under which $d\nu$ is transformed into $d\mu$ given by \eqref{explicit mu}.
This leads us naturally to the isometry $\iota : L^2(\RR, d\mu)\cong L^2(\mathfrak{X}, d\nu)$.

We give integral representations of $\Ucal$ and of $\Ucal^{-1}$.
The following gives us an alternative proof of Theorem \ref{Fourier inversion}.
\begin{thm}
We have $\iota\circ \Ucal=\Fcal$ and $\Ucal^{-1}\circ\iota^{-1}=\Fcal^*$.
Namely, we have
${\rm Im}(\Fcal)\subset \Acal$, ${\rm Im}(\Fcal^*)\subset \Hcal(G,K)$,
$$\iota\circ\Ucal(f)= \int_{G}\Omega_s(g)f(g)dg$$
for any $f \in \Hcal(G,K)$ and
$$\Ucal^{-1}\circ\iota^{-1}(\a)= \int_{\mathfrak{X}}\Omega_s(g)\a(s)d \nu(s)$$
for any $\a \in \Acal$.
\end{thm}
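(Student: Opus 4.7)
The plan is to identify $\Fcal$ with $\iota\circ\Ucal$ on $\Hcal(G,K)$ by showing that both maps intertwine the action of $R(T'(\gp))$ with multiplication by $q^{-s}+q^s$ and that both send $\Phi_0={\rm ch}_K$ to the constant function $1$. Since $\Hcal(G,K)$ is generated as a $\CC$-algebra by $T'(\gp)$---equivalently, the orbit $\{R(T'(\gp))^k\Phi_0\}_{k\ge 0}$ spans $\Hcal(G,K)$ by the three-term recurrence underlying Theorem \ref{Hecke alg IFS}---agreement on $\Phi_0$ together with the intertwining property forces $\Fcal=\iota\circ\Ucal$ everywhere.

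The core computation is the identity $\Fcal(R(T'(\gp))f)(s)=(q^{-s}+q^s)\Fcal(f)(s)$. Unwinding the definition, applying Fubini, and substituting $h=gx$ yields
\[
\Fcal(R(T'(\gp))f)(s)=\int_G f(h)\int_G T'(\gp)(x)\Omega_s(hx^{-1})\,dx\,dh.
\]
A further substitution $y=x^{-1}$ combined with the identity $T'(\gp)(y^{-1})=T'(\gp)(y)$ (coming from $K$-biinvariance and the symmetry $Kg^{-1}K=KgK$ noted in \S 1) rewrites the inner integral as $(R(T'(\gp))\Omega_s)(h)=(q^{-s}+q^s)\Omega_s(h)$ by property (i) of the spherical function. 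On the other side, $\Ucal$ sends $R(T'(\gp))$ to multiplication by $x$, as recorded just after Corollary \ref{moment}, and $\iota$ transports this to multiplication by $q^{-s}+q^s$; hence $\iota\circ\Ucal$ intertwines the same operators. The base case is immediate: $\Fcal(\Phi_0)(s)=\int_K\Omega_s(k)\,dk=\Omega_s(1_2)=1=P_0(q^{-s}+q^s)=(\iota\circ\Ucal)(\Phi_0)(s)$ using properties (ii), (iii) and $\vol(K)=1$. Together with the intertwining, induction over the generators $R(T'(\gp))^k\Phi_0$ gives $\Fcal=\iota\circ\Ucal$ on $\Hcal(G,K)$, and since $\Ucal(\Hcal(G,K))=\CC[x]$ and $\iota$ pulls $\CC[x]$ back to $\CC[q^{-s}+q^s]=\Acal$, we also obtain ${\rm Im}(\Fcal)\subset\Acal$ and the claimed integral representation for $\iota\circ\Ucal$.

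For the second equality, I would exploit that $\Omega_s$ is real-valued on $\gX$: unitarity of $\pi_s$ yields $\overline{\Omega_s(g)}=\Omega_s(g^{-1})$, and the Cartan symmetry $Kg^{-1}K=KgK$ together with $K$-biinvariance gives $\Omega_s(g^{-1})=\Omega_s(g)$. Since $d\nu$ is a positive real measure, Fubini produces
\[
\langle \Fcal(f),\alpha\rangle_{L^2(\gX,d\nu)}=\langle f,\Fcal^*(\alpha)\rangle_G,\qquad f\in\Hcal(G,K),\ \alpha\in\Acal,
\]
so $\Fcal^*$ is the Hilbert-space adjoint of $\Fcal$. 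The first part of the proof exhibits $\Fcal=\iota\circ\Ucal$ as an isometric isomorphism $\Hcal(G,K)\to\Acal$, and the adjoint of an isometric isomorphism is its inverse; hence $\Fcal^*=\Fcal^{-1}=\Ucal^{-1}\circ\iota^{-1}$, which in particular gives ${\rm Im}(\Fcal^*)\subset\Hcal(G,K)$ and the claimed integral formula. The main obstacle is the intertwining identity in the previous paragraph: everything else proceeds formally, but that step depends on careful convolution bookkeeping and the subtle use of $T'(\gp)(y^{-1})=T'(\gp)(y)$ to convert the $R(T'(\gp))$ acting on $f$ into an application of the spherical eigenvalue equation.
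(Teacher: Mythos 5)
Your proposal is correct and follows essentially the same route as the paper: both arguments prove the identity on the spanning family $T'(\gp)^{\circ m}=R(T'(\gp))^m\Phi_0$ using only properties (i)--(iii) of $\Omega_s$ (never its explicit formula), and both obtain the statement for $\Fcal^*$ by a duality/adjoint computation against that same spanning family using that $\Ucal$ and $\iota$ are isometries. The only cosmetic difference is that the paper evaluates $\Fcal(T'(\gp)^{\circ m})(s)=\bigl(R(T'(\gp))^m\Omega_s\bigr)(1_2)=(q^{-s}+q^s)^m$ directly, whereas you first establish the general intertwining relation $\Fcal(R(T'(\gp))f)=(q^{-s}+q^s)\Fcal(f)$ by a change of variables and then induct from $\Phi_0$.
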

\begin{proof}
We have only to compute $\iota \circ \Ucal(T'(\gp)^m)$ and $\Ucal^{-1}\circ \iota^{-1}((q^{-s}+q^s)^m)$ for any $m\in \NN_0$.
By definition, we see
\begin{align*}
\iota\circ\Ucal(T'(\gp)^m)(s)=(q^{-s}+q^{s})^m= R(T'(\gp)^m)\Omega_s(1_2)=\int_{G}\Omega_s(1_2g)T'(\gp)^m(g)dg
\end{align*}
for $m\in \NN_0$, which leads the assertion for $\iota\circ \Ucal$.
Let $\langle\cdot, \cdot \rangle_\mathfrak{X}$ denote the $L^2$-inner product on $L^2(\mathfrak{X}, d\nu)$.
For $m, n \in \NN_0$, we see
\begin{align*}
& \langle \Ucal^{-1}\circ\iota^{-1}((q^{-s}+q^s)^m)-\int_{\mathfrak{X}}\Omega_s(\cdot)(q^{-s}+q^s)^md\nu(s), T'(\gp)^n\rangle_G\\
= & \langle \Ucal^{-1}\circ\iota^{-1}((q^{-s}+q^s)^m), T'(\gp)^n\rangle_G
-\langle \int_{\mathfrak{X}}\Omega_s(\cdot)(q^{-s}+q^s)^md\nu(s), T'(\gp)^n\rangle_G\\
= & \langle (q^{-s}+q^s)^m, \iota\circ\Ucal(T'(\gp)^n)\rangle_\mathfrak{X}
-\int_{G}\int_{\mathfrak{X}}\Omega_s(g)(q^{-s}+q^s)^m d\nu(s)T'(\gp)^n(g)dg.
\end{align*}
Here we use that both $\Ucal$ and $\iota$ are isometries. 
By the integral representation of $\iota\circ \Ucal$, the first term of the last line of the equalities above
is transformed into
$$\int_\mathfrak{X} (q^{-s}+q^s)^m \int_G \Omega_s(g)T'(\gp)^n(g)dg d\nu(s).$$
Since both $\mathfrak{X}$ and ${\rm supp}(T'(\gp)^n)$ are compact, Fubini's theorem is applied
and the last line of the equalities above vanishes.
Since $n$ is arbitrary, the equality
$$\Ucal^{-1}\circ \iota^{-1}((q^{-s}+q^s)^m)(g)=\int_{\mathfrak{X}}\Omega_s(g)(q^{-s}+q^s)^md\nu(s)$$
holds,
from which we obtain the assertion for $\Ucal^{-1}\circ \iota^{-1}$.
\end{proof}

\smallskip
\noindent
{\bf Remark }:
By the Macdonald formula \cite[Theorem 4.1.2]{Macdonald}, we have 
\begin{align}\label{explicit of omega}
\Omega_s(\begin{smallmatrix}
	\varpi^n & 0 \\ 0 & 1
\end{smallmatrix})=\frac{q^{-n/2}}{1+q^{-1}}\left(\frac{1-q^{-1+2s}}{1-q^{2s}}q^{-ns}+\frac{1-q^{-1-2s}}{1-q^{-2s}}q^{ns}\right), \qquad n\in \NN_0
\end{align}
(see also \ \cite[Theorem 4.6.6]{Bump}).
His proof of Theorem \ref{Fourier inversion}, which is valid for a general $p$-adic group, relies on explicit calculations of the integrals
$\Fcal(\Psi_m)$ and $\Fcal^*((q^{-s}+q^{s})^m)$ (cf.\ \cite{Tadic} and \cite[\S5.4]{Tsuzuki2}).
Contrary to Macdonald's method, we use properties (i) and (iii) of the spherical function $\Omega_s$ but not its explicit formula \eqref{explicit of omega}. Indeed, we first obtain isometries $\iota \circ \Ucal$
and
$\Ucal^{-1}\circ \iota^{-1}$ by virtue of quantum probability theory,
and finally obtain integral representations $\Fcal$ and $\Fcal^*$ of them.

\section*{Acknowledgements}

The authors would like to thank Hiroki Sako for fruitful discussion.
Takehiro Hasegawa was partially supported by JSPS KAKENHI (grant number 15K17508).
Hayato Saigo was partially supported by JSPS KAKENHI (grant number 26870696).
Seiken
Saito was partially supported by JSPS KAKENHI (grant number 16K05259).


\medskip
\noindent
{Takehiro HASEGAWA \\
	Shiga University, Otsu, Shiga 520-0862, Japan} \\
{\it E-mail} : {\tt thasegawa3141592@yahoo.co.jp}

\medskip
\noindent
{Hayato SAIGO\\
	Nagahama Institute of Bio-Science and Technology, 1266, Tamura,
	Nagahama 526-0829, Japan} \\
{\it E-mail} : {\tt h\_saigoh@nagahama-i-bio.ac.jp}

\medskip
\noindent
{Seiken SAITO\\
	Waseda University, Shinjuku, Tokyo 169-8050, Japan} \\
{\it E-mail} : {\tt seiken.saito@aoni.waseda.jp}

\medskip
\noindent
{Shingo SUGIYAMA\\
	Institute of Mathematics for Industry, Kyushu University, 744, Motooka, Nishi-ku, Fukuoka 819-0395, Japan} \\
{\it E-mail} : {\tt s-sugiyama@imi.kyushu-u.ac.jp}

\end{document}